\title [Flux Recovery on Quadratic Immersed FEM]{ Flux Recovery and Superconvergence of Quadratic Immersed Interface Finite Elements }
\author{So-Hsiang Chou$^\dagger$ and C. Attanayake$^\ddagger$}
\thanks{$\dagger$
 Department of Mathematics and Statistics, Bowling Green
State University, Bowling Green, OH, 43403-0221. {{\tt email:
chou@bgsu.edu}};\,$\ddagger$ Department of Mathematics,
Miami University
Middletown, OH 45042. {{\tt
e-mail:attanac@muohio.edu}} }
\date{Preprint submitted to IJNAM on 10-23-2015}						
\newtheorem{theorem}{Theorem}[section]
\newcommand{\al}{\alpha}
\def \xj {x_{j,1}}
\def \xjh {x_{j,2}}
\def \xjo {x_{j,3}}
\def \xja {x_{\alpha/2}}
\begin{document}
\maketitle
\begin{abstract}
We introduce a flux recovery scheme for the computed solution of a quadratic  immersed finite element method introduced by Lin {\it et al.} in \cite{LinSun}. The recovery is done at nodes and
interface point first and by interpolation at the remaining points. We show that the end nodes are superconvergence points for both the primary variable $p$ and its flux $u$.
Furthermore, in the case of  piecewise constant diffusion coefficient without the absorption term  the errors at end nodes and interface point in the approximation of  $u$ and $p$ are zero.
In the general case,  flux error at end nodes and  interface point is third order.  Numerical results are provided to
confirm the theory.
\end{abstract}

\section{Introduction}
We consider the interface two-point boundary value problem
\begin{equation}
\begin{cases}
-(\beta(x)p'(x))'+q(x)p(x)= f(x), &  x\in (a,b),  \label{eqn:First}\\
p(a)= p(b) = 0,
\end{cases}
\end{equation}
where $q(x)\ge 0$ and $0<\beta\in C(a,\alpha)\cup C(\alpha,b))$ is  piecewise constant  with a finite
 jump across the interface point $\alpha$ so that the solution $p$ satisfies
\begin{eqnarray}
[p]_{\alpha} =0,  \label{jump1}\\
\left[\beta p'\right]_{\alpha}=0,  \label{jump2}
\end{eqnarray}
where
$[s]_{\alpha}=s^+-s^-$ denotes the jump of the quantity $s$ across $\alpha$.

Physically the variable $p$ may stand for  the pressure or temperature in a material with certain physical properties
and the derived quantity $u:=-\beta p'$ is the corresponding flux, which may be of equal interest. The piecewise constant $\beta$ reflects a nonuniform material and the function
$q(x)$ reflects a property of the material or its surroundings. In this paper we will refer to $p$ as pressure. Problem (\ref{eqn:First})-(\ref{jump2}) can also be viewed as the steady neutron diffusion problem \cite{Stak}. Due to
its simple structure, a lot of its mathematical and numerical properties of related numerical methods can be explicitly worked out.
Therefore, it is very instructive to study this problem before moving to its higher dimensional and/or nonsteady versions. It is in this sprit that we shall study the immersed finite elements for this problem. Efficient numerical methods for (\ref{eqn:First})-(\ref{jump2})
may  use  meshes that are either fitted or unfitted with the interface. A method allowing unfitted meshes would be very efficient when one has to follow a moving interface in a temporal problem.  For an in-depth exposition of the numerics and applications of interface problems, we refer the reader to \cite{LI:2006} and the references therein. In an immersed finite element (IFE) method, the mesh is made up of interface elements where the interface intersects elements  (thus immersed) and noninterface elements where the interface is absent. On a noninterface element one uses standard local shape functions, whereas on an interface element one uses piecewise standard local shape functions subject to continuity and jump conditions. Representative works
on IFE methods can be found in \cite{He, LI, LI:2006, Li2004, Li2003, TLin2001, Zhan}, among others.
We are interested in studying IFE methods that can produce accurate approximate flux  $u_h$ of $p$ once an approximate $p_h$ has been obtained,
 particularly those that can recover flux without having to solve a system of equations. Chou and Tang \cite{ChouTang} initiated such methods when the mesh is
fitted. Later it was generalized to the immersed interface mesh case using linear immersed finite elements (IFE) of Lin {\it et al.} \cite{LinSun}
  and their variants for one dimensional elliptic and parabolic problems \cite{AttanChou, Chou}. In this paper we concentrate on quadratic elements. We aim at a method that
will extend good features such as existence of superconvergence points, discrete conservation law that we
have  either proved or observed in the linear case.

To begin with, let's first give the central idea \cite{ChouTang} behind our flux recovery scheme on a mesh $\{t_i\}$. Suppose we want to evaluate $u(t_i)$ at some mesh point $t_i$ using some weighted
integral of $p$. We can proceed as follows. Let $\phi$ be a function with compact support $K$ such that $I_i=[t_{i-1},t_{i}]\subset K$, the interface point $\alpha\not\in K$, $\phi(t_{i-1})=0$, $\phi(t_i)=1$. An example of such a function is the standard finite element hat function.  Multiplying (\ref{eqn:First}) by $\phi$ and integrating by parts, we see that the flux $u$ satisfies
\[   u(t_i)=-\int_{I_i}\beta p'\phi'dx-\int_{I_i}qp\phi dx+\int_{I_i}f\phi dx.\]
It is then natural to define an approximate flux $u_h$ at $t_i$ as

\[   u_h(t_i)=-\int_{I_i}\beta p_h'\phi'dx-\int_{I_i}qp_h\phi dx+\int_{I_i}f\phi dx.\]
The error $E_i:=u(t_i)-u_h(t_i)$ then satisfies
\[   E_i=-\int_{I_i}\beta (p'-p_h')\phi'dx-\int_{I_i}q(p-p_h)\phi dx.\]
In the case that $\phi$ is linear on $I_i$, $q=0$, $p=p_h$ at $t_{i-1},t_i$, we immediately see that the error in flux is also zero at $t_i$.
With a little calculation using the jump conditions (\ref{jump1})-(\ref{jump2}), the same line of thought works when $\alpha\in I_i$.
In this paper the $\phi$'s will be from the immersed quadratic shape functions and we show in Thm \ref{thm4.3} that in the case of $q=0$, the quadratic IFE solution $p_h=p$ at all end nodes and as a consequence $u=u_h$ at those points as well. When $q\not=0$, the exactness cannot be attained due to the nature of the Green's
function involved (see the proof Thm \ref{thm4.4}), but those points are still superconvergence points of the pressure and flux .
 Another feature of our scheme is that when $q=0$ the following conservation law or discrete first fundamental theorem
of calculus holds:
\[    u_h(t_i)-u_h(t_{i-1})=\int_{I_i}f(x)dx,\]
whose continuous version can be obtained for the exact flux from integrating (\ref{eqn:First}).
The above two features in higher dimensional IFE methods are under investigation \cite{Chou2}. Finally, since the IFE reduces to the standard finite elements in absence of the interface, the superconvergence results in this paper also apply to
the standard finite elements and are consistent with those corresponding results in \cite{Wahlbin} when applicable.
The organization of this paper is as follows.  In Section 2 we introduce the quadratic immersed finite element space of  Lin {\it et al.} \cite{LinSun}  and its approximation properties.
In Section three we give the main pointwise error estimates for both pressure and flux. In the last section we provide numerical
results to confirm the theory.

\section{Approximation Space}\label{AppSpc}

%
Consider the weak formulation of the interface problem (\ref{eqn:First})-(\ref{jump2}) : Find $p\in H_0^{1}(a,b)$ such that
\begin{equation}\label{weak}
\int_{a}^{b}\beta(x)p'(x)v'(x)dx+\int_a^b q(x)p(x)v(x)dx = \int_{a}^{b}f(x)v(x)dx \quad \forall v\in H_0^{1}(a,b),
\end{equation}
where $f\in L^{2}(a,b)$. It is known that the solution $p\in H_0^{1}(a,b)$ exists and  further $p\in H^{2}(a,\alpha)\cap H^{2}(\alpha,b)$.
We use functions in the quadratic IFE space introduced in \cite{LinSun} to approximate $p$.
 Let $a=t_{0}<t_{1}<\ldots<t_{k}<t_{k+1}\ldots <t_{n}=b$ be a partition of $I=[a,b]$, and the interface point $\alpha\in (t_{k},t_{k+1})$ for some $k$. Let $h=\max_{1\leq i\leq n}(t_{i}-t_{i-1})$. To build local quadratics, each element $I_j=[t_j,t_{j+1}]$ is associated with two end nodes and one midpoint
  node, whose local labels are
 \[  x_{j,1}=t_j, \quad x_{j,2}=t_{j+1/2}=:\frac{t_{j}+t_{j+1}}{2},\quad x_{j,3}=x_{j+1,1} = t_{j+1}.\]
 Note that in this ordering we have
\[
x_{j-1,3}=x_{j,1}, \quad x_{j,3}=x_{j+1,1}.
\]
On a non-interface element $I_j,j\neq k$ we let $\phi_{j,i}$  denote the standard local quadratic shape functions associated with $x_{j,i}$, $i=1,2,3$
such that $\phi_{j,i}(x_{j,l})=\delta_{i,l}$, i.e.,
%
%
%
\begin{eqnarray*}
\phi_{j,1} &=& \frac{x^{2} - x(x_{j,2} + x_{j,3})+x_{j,2}x_{j,3}}{(x_{j,2}-x_{j,1})(x_{j,3}-x_{j,1})},\\
\phi_{j,2} &=& \frac{x^{2} - x(x_{j,1} + x_{j,3})+x_{j,1}x_{j,3}}{(x_{j,1}-x_{j,2})(x_{j,3}-x_{j,2})},\\
\phi_{j,3} &=& \frac{x^{2} - x(x_{j,1} + x_{j,2})+x_{j,1}x_{j,2}}{(x_{j,1}-x_{j,3})(x_{j,2}-x_{j,3})}.
\end{eqnarray*}
For the interface element $I_k$ then the basis function $\phi_{k,i}$ associated with $x_{k,i}, i=1,2,3$ is defined so that
it is quadratic on $(x_{k,1},\alpha)$ and $(\alpha,x_{k,3})$ individually with
\[
[\phi_{k,i}]_{\alpha}= [\beta \phi'_{k,i}]_{\alpha}= [\beta \phi''_{k,i}]_{\alpha}=0.
\]
More specifically, for $j=k$ define
\begin{equation}\label{D}
 D=(\al-\xj)(\xjh-\xj)+(\al-\xj)(\xjo-\al)+\rho(\al-\xjh)(\al-\xjo)>0.
 \end{equation}


Then
\begin{equation*}
\phi_{j,1}(x)=
\begin{cases}
 \frac{(x-(\xjh+\xjo-\xj))(x-\xj)}{D}+1
   & x \in(\xj, \alpha),
\\
\frac{\rho(x-\xjh)(x-\xjo)}{D}
& x\in (\alpha,\xjo).
\end{cases}
\end{equation*}


\begin{equation*}
\phi_{j,2}(x)=
\begin{cases}
 \frac{(\xj-\al)(x-\al)(x-\xj) -\rho(\xjo-\al)(x-\xj)(x-(\al+\xjo-\xj))}{D\rho(\xjo-\xjh)}
   & x \in(\xj, \alpha),
\\
\frac{(\xj-\al)(x-\xjo)(x+(\xjo-\al-\xj))-\rho(\xjo-\al)(x-\al)(x-\xjo)}{D(\xjo-\xjh)}
& x\in (\alpha,\xjo).
\end{cases}
\end{equation*}


\begin{equation*}
\phi_{j,3}(x)=
\begin{cases}
 \frac{(\al-\xj)(x-\al)(x-\xj)-\rho(\al-\xjh)(x-\xj)(x-(\al+\xjh-\xj))}{D\rho(\xjo-\xjh)}
   & x \in(\xj, \alpha),
\\
\frac{(\al-\xj)(x-\xjh)(x-(\al+\xj-\xjh))-\rho(\al-\xjh)(x-\al)(x-\xjh)}{D(\xjo-\xjh)}
& x\in (\alpha,\xjo).
\end{cases}
\end{equation*}

Defining for $0\leq j\leq  n-1$ the local approximation space
\[      S_h(I_j)=span\{\phi_{j,i}, i=1,2,3\},\]
we see that it is the standard quadratics for non-interface elements and  is a piecewise quadratic space with a notable second derivative jump condition
 $[\beta \phi''_{k,i}]_{\alpha}=0$ for the interface element. This space was introduced in \cite{LinSun} and the extra condition is to guarantee optimal approximation error (Lin {\it et al.}\cite{LinSun} also defined other spaces, but they do not have optimal approximability). For each end node $t_j$ we define the global basis function $\phi_j$ to be one at the node and zero at other nodes so that
 $\phi_j\in S_h(I_j)$ and similarly for midpoint nodes. In this way we have constructed the global finite element space
  \[V_{h}=span \{\phi_{i},\phi_{i+1/2}\}_{i=0}^{n-1}\cap H^1_0(a,b)\]
   as an IFE space for approximating $p$. Consider the following  immersed  interface method for problem (\ref{eqn:First}):
Find $p_{h} \in V_{h} \subset H^{1}_{0}(a,b)$ such that
\begin{equation}\label{eqn:weak}
\int_{a}^{b}\beta p'_{h}v_{h}'dx+\int_a^b qp_hv_hdx = \int_{a}^{b}fv_{h}dx\qquad \forall v_{h}\in V_{h}.
\end{equation}
For simplicity, we assume that the coefficient
$\beta$ is positive and piecewise constant, i.e.,,
$$ \beta(x) = \beta^- ~~ \text{for } x\in (a,\alpha);~~~ \beta(x) = \beta^+ ~~
\text{for } x\in (\alpha,b).$$

Using the optimal approximation property of the interpolant $p_I\in V_h$ of $p$ in \cite{LinSun}, it is routine to prove the following theorem.

\begin{theorem} Assume that the solution to $p$ of (\ref{eqn:First})-(\ref{jump2}) further satisfies $[\beta p'']=0$ then
\[
\|p-p_{h}\|_{0,I} + h|p-p_{h}|_{1,I}\leq Ch^{3}\|p\|_{3,I},
\]
where the norm $\|p\|_{3,I}$ is piecewise defined as
\[      \|p\|^2_{s,I}:=\|p\|^2_{H^s(a,\alpha)}+\|p\|^2_{H^s(\alpha,b)},\quad  s=3.\]
\end{theorem}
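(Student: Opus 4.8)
The plan is to treat (\ref{eqn:weak}) as what it is---a conforming Galerkin method, since $V_h\subset H_0^1(a,b)$ and (\ref{weak}), (\ref{eqn:weak}) share the same bilinear form $B(w,v):=\int_a^b\beta w'v'\,dx+\int_a^b q\,wv\,dx$---and then to run the textbook two-step finite element analysis: a C\'ea estimate for the $H^1$-seminorm, followed by an Aubin--Nitsche duality argument for the $L^2$ norm, feeding in each time the IFE interpolation error bounds for $p_I$ of Lin {\it et al.} \cite{LinSun}.

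For the first step I would note that $\beta\ge\beta_0>0$ and $q\ge 0$ make $B$ coercive with respect to $|\cdot|_{1,I}$ (via the Poincar\'e inequality on $H_0^1(a,b)$) and bounded on $H_0^1(a,b)$; together with the Galerkin orthogonality $B(p-p_h,v_h)=0$ for all $v_h\in V_h$ this yields $|p-p_h|_{1,I}\le C\inf_{v_h\in V_h}|p-v_h|_{1,I}\le C|p-p_I|_{1,I}$. The hypothesis $[\beta p'']_\alpha=0$, together with the natural conditions $[p]_\alpha=[\beta p']_\alpha=0$ of (\ref{jump1})--(\ref{jump2}), is exactly what is needed for the optimal-order IFE interpolation estimates of \cite{LinSun}, namely $|p-p_I|_{1,I}\le Ch^2\|p\|_{3,I}$ and $\|p-p_I\|_{0,I}\le Ch^3\|p\|_{3,I}$. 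Hence $|p-p_h|_{1,I}\le Ch^2\|p\|_{3,I}$, which already controls the term $h|p-p_h|_{1,I}$.

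For the $L^2$ bound I would set $e:=p-p_h$ and introduce the (here self-adjoint) auxiliary problem: find $w\in H_0^1(a,b)$ with $B(v,w)=\int_a^b e\,v\,dx$ for all $v\in H_0^1(a,b)$, i.e. $-(\beta w')'+qw=e$, $w(a)=w(b)=0$. Since the data $e$ lies in $L^2(a,b)$ and is continuous across $\alpha$ (both $p$ and $p_h\in H_0^1$ are), the elliptic regularity for the interface problem quoted in Section~\ref{AppSpc} gives $w\in H^2(a,\alpha)\cap H^2(\alpha,b)$ with $\|w\|_{2,I}\le C\|e\|_{0,I}$. Then, using Galerkin orthogonality and boundedness of $B$,
\[
\|e\|_{0,I}^2=B(e,w)=B(e,w-w_I)\le C\,|e|_{1,I}\,|w-w_I|_{1,I}\le C\,h\,|e|_{1,I}\,\|e\|_{0,I},
\]
where $w_I\in V_h$ is the IFE interpolant of $w$ and I have used $|w-w_I|_{1,I}\le Ch\|w\|_{2,I}\le Ch\|e\|_{0,I}$. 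This gives $\|e\|_{0,I}\le Ch\,|e|_{1,I}\le Ch^3\|p\|_{3,I}$, and adding it to the bound for $h|p-p_h|_{1,I}$ completes the proof.

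The argument is essentially routine; the one place I would slow down is checking that the $H^1$-order interpolation bound used in the duality step really does apply to $w$. It helps that $w$ need only be approximated to first order in $H^1$, and such an $O(h)$ estimate holds for any piecewise-$H^2$ function satisfying the natural jump conditions, so one need not verify that $w$ also satisfies $[\beta w'']_\alpha=0$. The only other thing to keep straight is that all Sobolev norms near $\alpha$ are the broken norms $\|\cdot\|_{s,I}$, so every interpolation and regularity estimate must be applied separately on $(a,\alpha)$ and on $(\alpha,b)$ before being recombined.
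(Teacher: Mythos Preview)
Your proposal is correct and is exactly the ``routine'' argument the paper has in mind: the paper gives no detailed proof, simply stating that the result follows from the optimal interpolation property of $p_I\in V_h$ in \cite{LinSun}, and your C\'ea\,+\,Aubin--Nitsche write-up is the standard way to fill that in. Your caveat about the duality step is well placed; in fact, since $\beta$ is piecewise constant the dual solution satisfies $\beta w''=qw-e$ on each side of $\alpha$, and both $qw$ and $e=p-p_h$ are continuous across $\alpha$, so $[\beta w'']_\alpha=0$ holds automatically and the full IFE interpolation estimate applies to $w$ as well.
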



\section{Construction of Approximate Flux}
In this section we construct the approximate flux $u_h$ of the exact flux $u=-\beta p'$.  In other word, we shall do flux recovery after we have obtained
the approximate pressure $p_h$. We first derive simple formulas for $u_h$ at the nodes of the elements and at the interface point. To build the global
$u_h$ we use piecewise quadratic interpolation. It is proper to point out at this stage that $u_h$ below is not defined as $-\beta p'_h$.

 To shorten presentation of the equations below we will collect the two terms in (\ref{eqn:First}) as
 \begin{equation}\label{collect}
       F:=f(x)-qp(x),\quad \mbox{ and its discrete version: } F_h:=f(x)-qp_h(x).
 \end{equation}
 Let us now multiply  (\ref{eqn:First}) by $\phi_{j,3}$ and integrate by parts over $I_{j} = [x_{j,1},x_{j,3}]$, $j\neq k$ to get
\begin{equation}\label{wkJ}
u(x^{-}_{j+1,1}) = u(x_{j,3}^{-}) = -\beta(x_{j,3}) p'(x_{j,3}) = -\int_{x_{j,1}}^{x_{j,3}}\beta p'\phi'_{j,3}dx + \int_{x_{j,1}}^{x_{j,3}}F\phi_{j,3}dx.
\end{equation}
Next we multiply  (\ref{eqn:First}) by $\phi_{j+1,1}$ and integrate by parts over $I_{j+1} = [x_{j+1,1},x_{j+1,2}]$, $j\neq k$ to get
\begin{eqnarray}\label{wkJp1}
u(x_{j+1,1}^{+}) &=& u(x^{+}_{j,3})= -\beta(x_{j+1,1}) p'(x_{j+1,1})\\
 &=& \int_{x_{j+1,1}}^{x_{j+1,3}}\beta p'\phi'_{j+1,1}dx - \int_{x_{j+1,1}}^{x_{j+1,3}}F\phi_{j+1,1}dx.\notag
\end{eqnarray}
Thus, if $p_{h}$ is a good approximate of $p$, we can define $u_{h}(x^{-}_{j+1,1})$ and $u_{h}(x^{+}_{j+1,1})$
on $I_{j}$ and $I_{j+1}$ respectively as,
\begin{eqnarray}
u_{h}(x^{-}_{j+1,1}) &=& u_{h}(x_{j,3}^{-}) \notag \\
&=& -\int_{x_{j,1}}^{x_{j,3}}\beta p_{h}'\phi'_{j,3}dx + \int_{x_{j,1}}^{x_{j,3}}F_h\phi_{j,3}dx, \label{uhj1m} \\
u_{h}(x_{j+1,1}^{+}) &=&  u_{h}(x^{+}_{j,3}) \notag \\
&=& \int_{x_{j+1,1}}^{x_{j+1,3}}\beta p_{h}'\phi'_{j+1,1}dx - \int_{x_{j+1,1}}^{x_{j+1,3}}F_h\phi_{j+1,1}dx.  \label{uhj1p}
\end{eqnarray}
Substituting $v_{h} = \phi_{j+1,1}$ into (\ref{eqn:weak}), we see that
\begin{eqnarray*}
\int_{x_{j,1}}^{x_{j+1,3}}\beta p_{h}'\phi'_{j+1,1}dx &=& \int_{x_{j,1}}^{x_{j+1,3}}F_h\phi_{j+1,1}dx, \\
-\int_{x_{j,1}}^{x_{j,3}}\beta p_{h}'\phi'_{j,3}dx + \int_{x_{j,1}}^{x_{j,3}}F_h\phi_{j,3}dx &=&
 \int_{x_{j+1,1}}^{x_{j+1,3}}\beta p_{h}'\phi'_{j+1,1}dx - \int_{x_{j+1,1}}^{x_{j+1,3}}F_h\phi_{j+1,1}dx.
\end{eqnarray*}
Thus, from (\ref{uhj1m}) and (\ref{uhj1p}) we can uniquely define
 $u_{h }(x_{j+1,1})=u_{h}(x^{-}_{j+1,1}) = u_{h }(x^{+}_{j+1,1})$.
 In a similar fashion by setting $v_{h} = \phi_{j,1}$ in (\ref{eqn:weak}), we can show that
 $u_h(x^{-}_{j,1}) = u_h(x^{+}_{j,1}) = u_h(x_{j,1})$.
%
%
%
%

For { the midpoint nodes}  with the basis function  by $\phi_{j,2}$,  over the interval
$[x_{j,1},x_{j,2}]$, $j\neq k$ we can define
\begin{equation*}
u_{h}(x^{-}_{j,2})=  -\int_{x_{j,1}}^{x_{j,2}}\beta p_{h}' \phi'_{j,2}dx +
\int_{x_{j,1}}^{x_{j,2}}F_h\phi_{j,2}dx,
\end{equation*}
and for the interval $[x_{j,2},x_{j,3}]$
\begin{equation*}
u_{h}(x_{j,2}^{+}) =  \int_{x_{j,2}}^{x_{j,3}}\beta p_{h}' \phi'_{j,2}dx -
\int_{x_{j,2}}^{x_{j,3}}F_h\phi_{j,2}dx.
\end{equation*}
Again setting $v_{h} = \phi_{j,2}$ in (\ref{eqn:weak}) gives
\begin{eqnarray*}
\int_{x_{j,1}}^{x_{j,3}}\beta p_{h}'\phi'_{j,2}dx &=& \int_{x_{j,1}}^{x_{j,3}}F_h\phi_{j,2}dx \\
-\int_{x_{j,1}}^{x_{j,2}}\beta p_{h}'\phi'_{j,2}dx + \int_{x_{j,1}}^{x_{j,2}}F_h\phi_{j,2}dx &=&
 \int_{x_{j,2}}^{x_{j,3}}\beta p_{h}'\phi'_{j,2}dx - \int_{x_{j,2}}^{x_{j,3}}F_h\phi_{j,2}dx.
\end{eqnarray*}
Thus $u_{h}(x_{j,2}):=u_h(x^{-}_{j,2}) = u_h(x^{+}_{j,2})$.
So we have $u_{h}(x_{j,i}) = u^{-}_{h}(x_{j,i}) = u^{+}_{h}(x_{j,i})$ for $i=1,2,3$.

On a non-interface element, $u_h$ at a non-nodal point is defined  by quadratic interpolation:
$I_j$ $j\neq k$, we define $u_{h}(x)$ as
\begin{equation} \label{NnIntFlux}
u_{h}(x) := u_{h}(x_{j,1})\phi_{j,1}(x) + u_{h}(x_{j,2})\phi_{j,2}(x) + u_{h}(x_{j,3})\phi_{j,3}(x) \quad \textrm{for $x\in I_j.$}
\end{equation}
%
%
%
To approximate flux on the interface element, first we multiply (\ref{eqn:First}) by $\phi_{k,1}$ and integrate by parts over $I_{k} = [x_{k,1},x_{k,3}]$
to get
\begin{eqnarray*}\label{wkJp11}
u(x_{k-1,3}^{+}) &=&u(x^{+}_{k,1})\\
&=& -\beta(x_{k,1}) p'(x_{k,1}) \\
&=& \int_{x_{k,1}}^{\alpha}\beta p'\phi'_{k,1}dx
+ \int^{x_{k,3}}_{\alpha}\beta p'\phi'_{k,1}dx
-  \int_{x_{k,1}}^{\alpha}F\phi_{k,1}dx - \int_{\alpha}^{x_{k,3}}F\phi_{k,1},
\end{eqnarray*}
where we have used the jump condition(\ref{jump1}).
For the adjacent non-interface element  $I_{k-1} = [x_{k-1,1},x_{k-1,3}]$, we get
\begin{eqnarray*}\label{wkJ1}
u(x^{-}_{k-1,3})=u(x_{k,1}^{-}) &=& -\beta(x_{k,1}) p'(x_{k,1}) \\
&=& -\int_{x_{k-1,1}}^{x_{k-1,3}}\beta p'\phi'_{k-1,3}dx +
\int_{x_{k-1,1}}^{x_{k-1,3}}F\phi_{k-1,3}dx.
\end{eqnarray*}
Then if $p_{h}$ is a good approximate for $p$ it is natural to define
\begin{eqnarray}
  u_{h}(x_{k-1,3}^{+}) &=& u_{h}(x^{+}_{k,1})\notag \\
 &=&
\int_{x_{k,1}}^{\alpha}\beta p_{h}'\phi'_{k,1}dx + \int_{\alpha}^{x_{k,3}}\beta p_{h}'\phi'_{k,1}dx
-\int_{x_{k,1}}^{\alpha}F_h\phi_{k,1}dx - \int_{\alpha}^{x_{k,3}}F_h\phi_{k,1}dx,
 \label{uhkp1}   \\
u_{h}(x^{-}_{k-1,3}) &=& u_{h}(x_{k,1}^{-}) \notag\\
& =& -\int_{x_{k-1,1}}^{x_{k-1,3}}\beta p_{h}'\phi'_{k-1,3}dx +
\int_{x_{k-1,1}}^{x_{k-1,3}}F_h\phi_{k-1,3}dx. \label{uhkm1}
\end{eqnarray}
Again setting $v_{h} = \phi_{k,1}$ in (\ref{eqn:weak}) as in the interface element gives
\begin{align*}
-\int_{x_{k-1,1}}^{x_{k-1,3}}&\beta p_{h}'\phi'_{k-1,3}dx +
\int_{x_{k-1,1}}^{x_{k-1,3}}F_h\phi_{k-1,3}dx.  \\
&=
\int_{x_{k,1}}^{\alpha}\beta p_{h}'\phi'_{k,1}dx + \int_{\alpha}^{x_{k,3}}\beta p_{h}'\phi'_{k,1}dx
-\int_{x_{k,1}}^{\alpha}F_h\phi_{k,1}dx - \int_{\alpha}^{x_{k,3}}F_h\phi_{k,1}dx.
\end{align*}
Thus, from (\ref{uhkp1}) and (\ref{uhkm1}) $u_{h}(x^{-}_{k,1}) = u_{h}(x^{+}_{k,1}) = u_{h}(x_{k,1})$. Similarly, it is easy to see that
$u_h(x^{-}_{k,2}) = u_h(x^{+}_{k,2}) = u_h(x_{k,2})$ and
$u_h(x^{-}_{k,3}) = u_h(x^{+}_{k,3}) = u_h(x_{k,3})$.

Since we want higher precision for $u_h(\alpha)$, it will not be defined by interpolation. According to the experience \cite{Chou}, we define
\begin{equation}\label{u_h_alp}
u_h(\alpha)=u_h(x_{k,1})+\int_{x_{k,1}}^\alpha F_h(x)dx,
\end{equation}
which is based on
\begin{equation}\label{u_alp}
u(\alpha)=u(x_{k,1})+\int_{x_{k,1}}^\alpha F(x)dx.
\end{equation}
Then, we can define the flux approximation over the interface element $I_{k}=[x_{k},x_{k+1}]$ by a cubic interpolating polynomial
at $\alpha$ and the three nodal points. Of course there are other choices based on how smooth $u$ can be. For example, one can first define by interpolation
\begin{equation} \label{IntFlux}
u_{h}(x) = u_{h}(x_{k,1})\phi_{k,1}(x) + u_{h}(x_{k,2})\phi_{k,2}(x) + u_{h}(x_{k,3})\phi_{k,3}(x) \quad \textrm{for $x\in I_k$}
\end{equation}
and get $u_h(\alpha)$ by evaluation. This approach is more natural for higher dimensional case.
In any case it is not hard to see that
 the resulting $L_2$ norm error estimates can be derived once the pointwise error estimates at nodal and/or interface points are known, which will be addressed
 in the next section.

%
%
%

%
For completeness, let us include in the next theorem a possible second order $L^2$ estimate without the knowledge of pointwise errors.
Thus it will be justified to call a point $x$ superconvergent one if
\[  u(x)-u_h(x)=O(h^{2+\sigma}), \mbox{  for some   } \sigma >0.\]

\begin{theorem}\label{thm4.2}
Let $u$ be the exact flux and $u_{h}$ be the approximated flux as defined by (\ref{NnIntFlux}) and (\ref{IntFlux}) for interface and non-interface elements respectively. Then
\[
\|u-u_{h}\|_{0,I} \leq Ch^{2}\|p\|_{3,I}.
\]
\end{theorem}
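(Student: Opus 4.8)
The plan is to estimate $\|u-u_h\|_{0,I}$ element by element, bounding the error on each $I_j$ by the interpolation error of the (piecewise) quadratic interpolant of $u$ plus the pointwise errors $|E_{j,i}|:=|u(x_{j,i})-u_h(x_{j,i})|$ at the nodes and, on the interface element, at $\alpha$. Concretely, on a non-interface element $I_j$, write $u-u_h = (u - \Pi_j u) + (\Pi_j u - u_h)$, where $\Pi_j u$ is the quadratic interpolating $u$ at $x_{j,1},x_{j,2},x_{j,3}$. Standard interpolation theory gives $\|u-\Pi_j u\|_{0,I_j}\le Ch^2|u|_{2,I_j}\le Ch^2\|p\|_{3,I_j}$, since $u=-\beta p'$ and $\beta$ is piecewise constant. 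For the second piece, $\Pi_j u - u_h = \sum_{i=1}^3 E_{j,i}\,\phi_{j,i}$, so $\|\Pi_j u - u_h\|_{0,I_j}\le C h^{1/2}\max_i|E_{j,i}|$ using $\|\phi_{j,i}\|_{0,I_j}\le Ch^{1/2}$. On the interface element $I_k$ one uses instead the cubic interpolant at $\alpha$ and the three nodes; $u$ is piecewise $H^2$ (indeed $u'=-\beta p'$ and by the jump conditions $[u]_\alpha=0$), and a cubic interpolant still yields an $O(h^2)$ bound for $\|u-\Pi_k u\|_{0,I_k}$, while the remaining term is again $\le Ch^{1/2}\max|E_{k,i}|$ together with the $\alpha$-term.

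Thus everything reduces to showing the pointwise errors satisfy $|E_{j,i}|\le Ch^{2}\|p\|_{3,I}$ (a little better than needed after summing, since $\sum_j \big(h^{1/2}\cdot h^2\big)^2 \sim h^{5}\cdot n \sim h^{4}$). To bound $E_{j,i}$, I would return to the defining formulas: for instance, from (\ref{wkJ})--(\ref{uhj1m}),
\[
E_{j,3}=u(x_{j,3})-u_h(x_{j,3}) = -\int_{x_{j,1}}^{x_{j,3}}\beta (p'-p_h')\phi'_{j,3}\,dx + \int_{x_{j,1}}^{x_{j,3}} q(p-p_h)\phi_{j,3}\,dx,
\]
and similarly for the other nodes and for the interface-element formulas (\ref{uhkp1})--(\ref{uhkm1}) and the $\alpha$-formula (\ref{u_h_alp})--(\ref{u_alp}). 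Each such expression is bounded by $C\|p-p_h\|_{1,I}\big(\|\phi'_{j,i}\|_{0,I_j}+\|\phi_{j,i}\|_{0,I_j}\big)$; since $\|\phi'_{j,i}\|_{0,I_j}\le Ch^{-1/2}$ and $\|p-p_h\|_{1,I}\le Ch^{2}\|p\|_{3,I}$ by Theorem 2.1, we get $|E_{j,i}|\le Ch^{-1/2}\cdot h^{2}\|p\|_{3,I}$. Combining, the contribution of each element to $\|u-u_h\|_{0,I}^2$ is at most $Ch^2\|p\|_{3,I_j}^2 + C h\cdot(h^{-1/2}h^2\|p\|_{3,I})^2$; summing over the $O(h^{-1})$ elements gives $\|u-u_h\|_{0,I}^2\le Ch^4\|p\|_{3,I}^2$, i.e. the claimed bound $\|u-u_h\|_{0,I}\le Ch^2\|p\|_{3,I}$. (One must be slightly careful that the global $H^1$ estimate enters each $|E_{j,i}|$, so the crude per-element summation of the constant $\|p-p_h\|_{1,I}$ would overcount; the fix is to keep $\|p-p_h\|_{1,I_j}$ localized in each $E_{j,i}$ estimate, which the Cauchy--Schwarz argument above does automatically, and only invoke the global bound at the very end.)

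The main obstacle I anticipate is the bookkeeping on the interface element: one must verify that the cubic interpolation using $\alpha$ still gives an honest $O(h^2)$ (or better) interpolation error even though $u$ is only piecewise smooth across $\alpha$ — this works because $u$ itself is continuous at $\alpha$ (from (\ref{jump2})) and the two pieces are individually smooth, so the cubic through the three nodes and $\alpha$ behaves like a standard interpolant on each sub-interval — and that the extra error $u(\alpha)-u_h(\alpha)$ from (\ref{u_h_alp}) is controlled: this one equals $E_{k,1}+\int_{x_{k,1}}^{\alpha} q(p-p_h)\,dx$, again $O(h^{-1/2}h^2\|p\|_{3,I})$, harmless after the $h^{1/2}$ weighting. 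A secondary technical point is making the constant $C$ independent of the mesh location of $\alpha$ and of the ratio $\rho=\beta^-/\beta^+$; for the present theorem, whose bound is only $O(h^2)$, this follows from the uniform bounds on the IFE shape functions and their derivatives established in \cite{LinSun}. I would therefore organize the proof as: (i) per-element split into interpolation error plus nodal-error term; (ii) bound the interpolation errors by classical theory; (iii) bound all nodal/interface pointwise errors via the defining integral formulas and Theorem 2.1; (iv) sum over elements.
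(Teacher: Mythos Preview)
Your approach is essentially the paper's: split $u-u_h$ on each element into an interpolation piece and the nodal-error term $\sum_i E_{j,i}\phi_{j,i}$, bound the latter via the defining integral formulas, Cauchy--Schwarz, the shape-function bounds $\|\phi_{j,i}\|_0\le Ch^{1/2}$, $\|\phi'_{j,i}\|_0\le Ch^{-1/2}$, and Theorem~2.1. One small correction: on the interface element the theorem uses the IFE interpolant (\ref{IntFlux}) at the three nodes, not the cubic through $\alpha$, and since $u=-\beta p'$ does not satisfy the IFE jump condition $[\beta u']_\alpha=0$ you cannot invoke standard or IFE interpolation theory for $\|u-\Pi_k u\|_{0,I_k}$; the paper handles this piece instead via the partition of unity $\sum_i\phi_{k,i}=1$, the mean value theorem, and a one-dimensional Sobolev embedding.
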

\begin{proof}
%
%
%
We give a proof for the interface element $I_{k}=[x_{k,1},x_{k,3}]$. The non-interface follows similarly. On the interval $(x_{k,1},x_{k,3})$
\begin{eqnarray}\label{FluxErrIntfc}
|(u_{h}-u)(x)| &\leq& \sum_{i=1}^{3}|(u_{h}- u)(x_{k,i})\phi_{k,i}| + \sum_{i=1}^{3}|(u(x_{k,i}) - u(x))\phi_{k,i}|
\end{eqnarray}
where we have used $\sum_{i=1}^{3}\phi_{k,i}=1$.
Let $\beta^*=\max\{\beta^-,\beta^+\}$.
For $i=1,3$
\begin{eqnarray*}
\|(u(x_{k,i}) - u_{h}(x_{k,i}))\phi_{k,i}\|_{0,I_{k}} &=& \|(\int_{x_{k,1}}^{x_{k,3}}\beta (p'_{h}- p')\phi'_{k,i}dx)\phi_{k,i}\|_{0,I_{k}}+ \\
&&\quad + \|(\int_{x_{k,1}}^{x_{k,3}}q (p-p_h)\phi_{k,i}dx)\phi_{k,i}\|_{0,I_{k}}\\
&=&J_1+J_2,
\end{eqnarray*}
where
\begin{eqnarray*}
J_1 &\leq& \beta^{*}|\int_{x_{k,1}}^{x_{k,3}}(p'_{h}-p')\phi'_{k,i}dx|\|\phi_{k,i}\|_{0,I_{k}} \\
&\leq&  \beta^{*}\|p'_{h}-p'\|_{0,I_{k}}\|\phi'_{k,i}\|_{0,I_{k}}\|\phi^{}_{k,i}\|_{0,I_{k}}.
\end{eqnarray*}

Now note it is not hard to see from (\ref{D}) that $D\ge Ch^2$ with the constant $C$ independent of $h$ and $\alpha$ and so
\begin{eqnarray*}
\|\phi^{}_{k,i}\|_{0,I_{k}} &\leq& Ch_{k}^{1/2} \leq Ch^{1/2} \\
\|\phi'^{}_{k,i}\|_{0,I_{k}} &\leq& Ch_{k}^{-1/2} \leq Ch^{-1/2}.
\end{eqnarray*}
With this in mind, we have
\[   J_1\leq Ch^{2}\|p\|_{3,I_k},\qquad  J_2\leq Ch^{4}\|p\|_{3,I_k},\]
where
\[  \|p\|^2_{3,I_k}:=\|p\|^2_{H^3(x_{k,1},\alpha)}+\|p\|^2_{H^3(\alpha,x_{k,3})}.\]
Hence
\begin{eqnarray}
\|u(x_{k,i}) - u_{h}(x_{k,i})\phi^{}_{j,i}\|_{0,I_{k}} &\leq& C\beta^{*}\|p'_{h} - p'\|_{0,I_{k}}  \notag \\
&\leq& Ch^{2}\|p\|_{3,I_{k}}. \label{InErr01}
\end{eqnarray}
Similar estimate holds when $i=2$.

As for the second term on the right of (\ref{FluxErrIntfc}), we have by the mean value theorem with some $\xi\in (x_{k,1},x_{k,2})$ so that
\[
|(u(x_{k,i}) - u(x))\phi^{}_{k,i}| = |x_{k,i}-x||u'(\xi)||\phi^{}_{k,i}|.
\]
Then using the one dimensional Sobolev imbedding theorem to extract a factor of $h^{1/2}$ we have
\begin{equation}\label{InErr02}
\|(u(x_{k,i})-u(x))\phi^{}_{k,i}\| \leq h^{3/2}\|\phi^{}_{k,i}\|\|p\|_{3,I_{k}} \leq h^{2}\|p\|_{3,I_{k}}.
\end{equation}
Combining this with (\ref{InErr01}) and (\ref{InErr02}) on (\ref{FluxErrIntfc}) gives
\[
\|u-u_{h}\|_{0,I_k} \leq Ch^{2}\|p\|_{3,I_k}.
\]
\end{proof}

\subsection{Pointwise  Errors at Nodes and Interface Point}
In this section we estimate approximate pressure and flux  errors at nodes and interface point.
Superconvergence points of pressure and flux are shown to be end nodes.
\begin{theorem}\label{thm4.3}
Consider problem (\ref{eqn:First}) with $q=0$.
 Let the approximate flux $u_{h}$ be defined by (\ref{NnIntFlux}) and (\ref{uhkp1})-(\ref{u_h_alp}). Let $p_h$ be the approximate pressure defined by (\ref{eqn:weak})
 and  $p$ be the pressure defined by (\ref{weak}).
 Suppose that
  the coefficient $\beta$ is piecewise constant. Then, the following statements hold.\\

{\bf (i) Exactness of approximate pressure $p_h$ at the end nodes:}
  \[       p(t_i)=p_h(t_i)\quad  \quad \mbox{ at all end nodes} \quad t_i, i=0,\ldots,n.\]

  {\bf (ii)
  Error in the approximate pressure at the interface point:}
  \[   |p(\alpha)-p_h(\alpha)|\leq Ch^{2.5},\]
  and at the midpoints
  \[  |p(x)-p_h(x)|\leq Ch^{2.5},\quad  x=t_{i+1/2},\, i=0,\ldots,n-1.\]

 { \bf (iii) Uniform error at the end nodes and interface point:}
The errors at
the end nodes and interface point are identical, i.e.
\begin{equation}\label{cnst}
E(x)=u(x)-u_{h}(x) =C
\end{equation}
for all  $x=t_i,i=1\ldots n,\alpha$.

   {\bf (iv) Exactness of approximate flux at the nodes and interface point.} \\
   The constant $C$ in (\ref{cnst}) is zero, i.e.,
   \[  u(x)=u_h(x)\qquad
  \mbox{ for  all }  x=t_i, i=0,\ldots,n\quad \mbox{ and } \alpha.\]
\end{theorem}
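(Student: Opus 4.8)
The plan is to handle the four items in sequence; the recurring tools are Galerkin orthogonality $a(p-p_h,v_h)=0$ for all $v_h\in V_h$ (with $a(v,w):=\int_a^b\beta v'w'\,dx$, since $q=0$), the partition of unity $\phi_{j,1}+\phi_{j,2}+\phi_{j,3}=1$ on each element $I_j$, and integration by parts over the two sub-intervals of the interface element. For \textbf{(i)} I would use a duality argument. Let $G_i:=G(\cdot,t_i)\in H_0^1(a,b)$ solve $a(G_i,\phi)=\phi(t_i)$ for all $\phi\in H_0^1(a,b)$ (point evaluation is bounded on $H_0^1$ in one dimension, so $G_i$ is well defined). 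Putting $\phi=p-p_h$ and using symmetry of $a$ gives $p(t_i)-p_h(t_i)=a(p-p_h,G_i)$, which vanishes once $G_i\in V_h$. The key observation is that for $q=0$ and piecewise-constant $\beta$ the quantity $\beta G_i'$ is piecewise constant on $(a,b)$ --- locally constant, changing only by the unit jump at $t_i$ --- so $G_i$ is continuous, vanishes at $a,b$, is linear on every element not cut in its interior by $\alpha$, and is broken-linear on the interface element $I_k$ with $[G_i]_\alpha=[\beta G_i']_\alpha=0$. A broken-linear function automatically also satisfies $[\beta G_i'']_\alpha=0$ (both one-sided second derivatives vanish), so its restriction to $I_k$ lies in $S_h(I_k)$, while on the other elements it is an ordinary linear, hence quadratic; being continuous, elementwise in the local spaces, and zero at $a$ and $b$, it lies in $V_h$. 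Hence $p(t_i)=p_h(t_i)$.

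For \textbf{(ii)}, write $e:=p-p_h$; by (i), $e$ vanishes at every end node, so $e(t_i)=e(t_{i+1})=0$ on each $I_i$. Then $e(t_{i+1/2})=e(t_{i+1/2})-e(t_i)=\int_{t_i}^{t_{i+1/2}}e'\,dx$, and Cauchy--Schwarz gives $|e(t_{i+1/2})|\le Ch^{1/2}|e|_{1,I_i}$; combining with the energy-norm estimate $|p-p_h|_{1,I}\le Ch^2\|p\|_{3,I}$ yields $|e(t_{i+1/2})|\le Ch^{2.5}\|p\|_{3,I}$. The same computation, starting from $e(t_k)=0$ and using $\alpha\in I_k$, gives $|p(\alpha)-p_h(\alpha)|\le Ch^{2.5}\|p\|_{3,I}$.

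For \textbf{(iii)} I would establish the discrete conservation law $u_h(t_{j+1})-u_h(t_j)=\int_{I_j}f\,dx$ for $j=0,\dots,n-1$. Combining the defining formulas for $u_h(t_j)$ (testing with $\phi_{j,1}$ over $I_j$) and $u_h(t_{j+1})$ (testing with $\phi_{j,3}$ over $I_j$) and using $\phi_{j,1}+\phi_{j,3}=1-\phi_{j,2}$, the difference $u_h(t_{j+1})-u_h(t_j)$ reduces to $\int_{I_j}f\,dx+\bigl(\int_{I_j}\beta p_h'\phi_{j,2}'\,dx-\int_{I_j}f\phi_{j,2}\,dx\bigr)$, and the bracket vanishes because $v_h=\phi_{j,2}$ (supported in $I_j$) in (\ref{eqn:weak}) with $q=0$ gives precisely $\int_{I_j}\beta p_h'\phi_{j,2}'\,dx=\int_{I_j}f\phi_{j,2}\,dx$. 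On the exact side, $u=-\beta p'$ is continuous on $[a,b]$ by (\ref{jump2}) and satisfies $u'=f$, so $u(t_{j+1})-u(t_j)=\int_{I_j}f\,dx$ as well; subtracting gives $E(t_{j+1})=E(t_j)$, so $E$ takes a single value $C$ at all end nodes. Subtracting (\ref{u_h_alp}) from (\ref{u_alp}) (with $F=F_h=f$) then gives $E(\alpha)=E(t_k)=C$, which is (\ref{cnst}).

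For \textbf{(iv)} it remains to show $C=0$; I compute $C=E(t_0)$. Testing (\ref{eqn:First}) and (\ref{eqn:weak}) with $\phi_{0,1}$ over $I_0$ and subtracting gives $E(t_0)=\int_{I_0}\beta e'\phi_{0,1}'\,dx$. Since $e(t_0)=e(t_1)=0$, and, by Galerkin orthogonality with $v_h=\phi_{0,2}$, $\int_{I_0}\beta e'\phi_{0,2}'\,dx=0$, integrating by parts over $I_0$ --- the boundary terms at $t_0,t_1$ drop because $e$ vanishes there, and any term at $\alpha$ (should $\alpha\in I_0$) drops by $[\beta\phi_{0,2}']_\alpha=0$ --- leaves $\int_{I_0}(\beta\phi_{0,2}'')\,e\,dx=0$; as $\beta\phi_{0,2}''$ is a nonzero constant on $I_0$ (on the interface element this is the common value enforced by $[\beta\phi_{0,2}'']_\alpha=0$), we get $\int_{I_0}e\,dx=0$. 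A second integration by parts, now applied to $\int_{I_0}\beta e'\phi_{0,1}'\,dx$, again annihilates the boundary terms and leaves a multiple of $\int_{I_0}e\,dx$, which is $0$. Hence $C=E(t_0)=0$, and with (iii), $u=u_h$ at all end nodes and at $\alpha$. I expect \textbf{(i)} to be the main obstacle: one must recognise that, for $q=0$ and piecewise-constant $\beta$, the nodal Green's function is exactly broken-linear and then check that such functions belong to the quadratic IFE space, the verification on the interface element --- where the non-standard constraint $[\beta\phi'']_\alpha=0$ comes into play --- being the delicate point; once (i) holds, (ii)--(iv) follow quickly from the energy estimate, the partition of unity, the single-element Galerkin relations, and integration by parts.
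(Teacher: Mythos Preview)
Your proposal is correct in all four parts. Parts (i), (iii), and (iv) follow essentially the same strategy as the paper: the Green's-function-in-$V_h$ argument for (i); the partition of unity $\sum_i\phi_{j,i}=1$ together with orthogonality to the midpoint basis function for (iii); and, for (iv), the combination of $e(t_0)=e(t_1)=0$ with orthogonality to $\phi_{1/2}$ to kill $E(t_0)$. Your integration-by-parts packaging of (iv) (reducing both integrals to multiples of $\int_{I_0}e\,dx$ via the fact that $\beta\phi_{0,i}''$ is a single constant on $I_0$, even on the interface element thanks to $[\beta\phi'']_\alpha=0$) is a tidy variant of the paper's explicit coefficient manipulation, and your framing of (iii) as a discrete conservation law is a one-element version of the paper's two-element telescoping identity; both are equivalent.

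The genuine difference is in part (ii). The paper stays with the duality framework: for $\xi=\alpha$ it subtracts from $G(\cdot,\alpha)$ a piecewise-linear bubble $b_h$ that absorbs the unit jump $[\beta G']_\alpha=-1$, so that $G-b_h\in V_h$ and $e(\alpha)=a(b_h,e)\le C|b_h|_{1}|e|_{1}\le Ch^{2.5}$; for $\xi=t_{i+1/2}$ it builds the quadratic interpolant $G_h$ of the broken-linear $G$ on $I_i$, computes $|G-G_h|_{1,I_i}\le Ch^{1/2}$ explicitly, and concludes $e(\xi)=a(G-G_h,e)\le Ch^{2.5}$. Your route is more elementary: you simply exploit part (i), write $e(\xi)=\int_{t_i}^{\xi}e'\,dx$ since $e(t_i)=0$, and apply Cauchy--Schwarz on an interval of length $\le h$ together with the global $H^1$ estimate. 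This bypasses the Green's-function constructions entirely and gives the same $O(h^{2.5})$ bound with less work. The trade-off is that the paper's approach generalises directly to the $q\neq0$ setting of the next theorem (where $e$ no longer vanishes at the end nodes and one really does need the duality estimate), whereas your shortcut relies on the exactness from (i) and is specific to the $q=0$ case.
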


\begin{proof}
Fix $\xi\in(a,b)$ and let $G(x,\xi)$ be the Green's function satisfying
\[ a(G(\cdot,\xi),v)=<\delta(x-\xi),v>,\quad v\in H^1_0(a,b).\]
By working out the closed form of $G$ satisfying the classical formulation
\begin{equation}\label{Gjump}
  -(\beta G')'=\delta(x-\xi),\quad [G]_\alpha=0,  \quad[\beta G']_\alpha=0, \quad G(a,\xi)=G(b,\xi)=0,
  \end{equation}
we see that $G$ can be expressed in terms of $\int_d^x\frac{1}{\beta(t)}dt$ for different $d$. For instance, the Green's
function for $(a,b)=(0,1)$ and $\xi<\alpha$ takes the form \cite{Chou}
\begin{equation}\label{green}
G(x,\xi)= \left\{ \begin{aligned}
 A\int_0^x\frac{1}{\beta(t)}dt,&\qquad 0<x\leq \xi,\\
(A-1)\int_{\xi}^x\frac{1}{\beta(t)}dt+A\int_0^\xi\frac{1}{\beta(t)}dt ,&\qquad \xi\leq x\leq \alpha,\\
(1-A)\int_x^1\frac{1}{\beta(t)}dt,&\qquad \alpha\leq x \leq 1,\\
\end{aligned} \right.
\end{equation}
where
\[A=\frac{\int_\xi^1\frac{1}{\beta(t)}dt}{\int_0^1\frac{1}{\beta(t)}dt}.\]
Note that $G(x,t_i)$ is piecewise linear when $\beta$ is piecewise constant.
%
Now let $G=G(x,t_i)$ and use Galerkin orthogonality property, then
\[   e(t_i)=(\delta(x-t_i), e)=a(G, e)=0,\]
since $G\in V_h$ (when $\beta(t)$ is piecewise constant, $G$ is piecewise linear and satisfies all the jump conditions including $[\beta G'']=0$).
This proves (i).

As for (ii), without loss of generality let's assume $\alpha$ lies in $(t_k,t_{k+1/2})$. At the interface point $\alpha$, $G(x,\alpha)$ is no longer in $V_h$ since $[\beta G']_\alpha=-1$, not zero (Equations (\ref{Gjump}) and
(\ref{green}) have obvious modifications).
In this case, let $G_h:=G-b_h$, where $b_h$ is the bubble function with support $[t_k,t_{k+1}]$, piecewise linear, and $[\beta b_h']_\alpha=-1$, i.e.,
\[  b_h(x)=\left\{
\begin{aligned}{\mathcal A}(t_{k+1}-\alpha)(x-t_k),&\quad x\in [t_k,\alpha]\\
{\mathcal A} (\alpha-t_k)(t_{k+1}-x),&\quad x\in[\alpha,t_{k+1}]\\
0&, \quad \mbox{ otherwise},
\end{aligned}
\right.
\]
where
\[  {\mathcal A}=\frac{1}{\beta^-(t_{k+1}-\alpha)+\beta^+(\alpha-t_k)}.\]
Noting that now $G_h\in V_h$ and $|b_h|_{1,I}\leq Ch^{1/2}$, we have
\[   e(\alpha)=(\delta(x-\alpha), e)=a(G, e)=a(G-G_h,e)=a(b_h,e)\leq C |b_h|_{1,I}|e|_{1,I}\leq Ch^{5/2}\|p\|_{3,I}.\]

For a midpoint $\xi=t_{i+1/2}$, its associated Green's function $G(x,\xi)$ is neither in $H^2(t_i,t_{i+1})$ nor in $V_h$, being piecewise linear in $(t_i,t_{i+1})$. To approximate $G$, we construct $G_h\in V_h$ such that $G=G_h$ over $I-(t_i,t_{i+1})$, and on $(t_i,t_{i+1})$ $G_h$ is defined as the quadratic interpolant to $G$ at the nodes $t_i,t_{m},t_{i+1}$, $t_m=t_{i+1/2}$. Thus using the local ordering
\begin{equation}\label{interp}
   G_h(x)=\sum_{j=1}^3G(x_{i,j})\phi_{i,j}(x)\qquad \forall x\in [t_i,t_{i+1}].
   \end{equation}
In addition, it is easy to see that
\begin{equation*}
G_h(x)-G(x)=
\begin{cases}
\frac{s}{2}(x-t_i)(x-t_m),\qquad  x\in[t_i,t_m],\\
\frac{s}{2}(x-t_m)(x-t_{i+1}),\qquad x\in [t_m,t_{i+1}],
\end{cases}
\end{equation*}
where the second derivative $s=G_h''$ can be computed from (\ref{interp}) (or centered difference by inspection !!) and
\begin{eqnarray*}
s&=&\frac{4}{h^2}(G(t_i)-G(t_m)+G(t_{i+1})-G(t_m))\\
&=& \frac{2}{h}(G'(t_m^+)-G'(t_m^-))\\
&=& \frac{2}{h\beta}(\beta G'(t_m^+)-\beta G'(t_m^-))\\
&=&\frac{-2}{h\beta}.
\end{eqnarray*}
Consequently,
\begin{equation}\label{bdd}
|G(x)-G_h(x)|\leq Ch \qquad \mbox{    and   } \quad  |G'_h(x)-G'(x)|\leq C.
\end{equation}
With this in mind we see that
\begin{equation}\label{H1}
   |G-G_h|_{1,I}=|G-G_h|_{1,I_i}\leq C h^{0.5}\qquad I_i=(t_i,t_{i+1}).
\end{equation}

Hence
\[   e(\xi)=(\delta(x-\xi), e)=a(G,e)=a(G-G_h,e)\leq Ch^{2.5}\|p\|_{3,I}.\]
 This completes the proof of (ii).

Next, we prove (iii).
Let $E(x)=u(x)-u_{h}(x)$. By (\ref{wkJ})-(\ref{uhj1p}),
\[
E(x_{j,3}) = -\int_{x_{j,1}}^{x_{j,3}}\beta(p'-p'_{h})\phi'_{j,3}dx+\int_{x_{j,1}}^{x_{j,3}}q(p_h-p)\phi_{j,3}dx
\]
and
\begin{eqnarray*}
E(x_{j+1,3}) &=& -\int_{x_{j+1,1}}^{x_{j+1,3}}\beta(p'-p'_{h})\phi'_{j+1,3}dx +\int_{x_{j+1,1}}^{x_{j+1,3}}q(p_h-p)\phi_{j+1,3}dx\\
&=& \int_{x_{j+1,1}}^{x_{j+1,3}}\beta(p'-p'_{h})(\phi'_{j+1,1} +\phi'_{j+1,2} )dx\\
&&+\int_{x_{j+1,1}}^{x_{j+1,3}}q(p_h-p)(1-(\phi_{j+1,1} +\phi_{j+1,2})) dx.
\end{eqnarray*}
Assembling contributions from the local shape functions, we have in terms of global shape functions $\phi_{j+\frac 3 2},\phi_{j+1}$
\begin{align}\notag
E&(x_{j+1,3})-E(x_{j,3}) \\\notag
&= a(p-p_{h},\phi_{j+\frac 3 2}) + a(p-p_{h},\phi_{j+1})+\int_{x_{j+1,1}}^{x_{j+1,3}}q(p_h-p) dx \\
&=\int_{x_{j+1,1}}^{x_{j+1,3}}q(p_h-p) dx.\label{unif3}
\end{align}
Hence $E(x_{j+1,3})=E(x_{j,3})$ when $q=0$.
The above argument holds for both interface and non-interface elements.
Finally, subtracting (\ref{u_h_alp}) from (\ref{u_alp}) we have
  \[
u(\alpha)-u_h(\alpha)=E(t_k)=C.\]
This completes the proof of (iii).

Now we prove (iv). Due to (iii), it suffices to look at
\begin{eqnarray*}
   E(t_0)&=&u(a)-u_h(a)\\
   &=&\int_a^{t_1}\beta(p'-p_h')\phi_0'dx\quad \mbox{  $\phi_0$ is the nodal quadratic shape function at $t_0$}\\
   &=&\frac{2\beta^-}{h^2}\int_a^{t_1}(p'-p_h')(2x-t_{1/2}-t_1)dx\\
   &=&\frac{2\beta^-}{h^2}\int_a^{t_1}(p'-p_h')(2x)dx   \quad \mbox{ by (i)}\\
   &=& 0,
   \end{eqnarray*}
   where the last equality is derived as follows.
   Since $a(p-p_h,\phi_{1/2})=0$,
   we have
   \begin{eqnarray*}
 0&=& \frac{-4\beta^-}{h^2}\int_a^{t_1}(p'-p_h')(2x-a-t_1)dx\\
 &=&\frac{-4\beta^-}{h^2}\int_a^{t_1}(p'-p_h')(2x)dx \quad \mbox{ by (i)}.
   \end{eqnarray*}
   This completes the proof of (iv) for end nodes.

  Subtracting (\ref{u_h_alp}) from (\ref{u_alp}), we have
  \[
u(\alpha)-u_h(\alpha)=E(t_k)=0.\]

%
  \end{proof}

We now move to the general case.
\begin{theorem}\label{thm4.4}
Consider problem (\ref{eqn:First}) with $q\geq 0$.
 Let the approximate flux $u_{h}$ be defined by (\ref{NnIntFlux}) and (\ref{uhkp1})-(\ref{u_h_alp}), $p_h$ defined by (\ref{eqn:weak}), and $p$ defined by (\ref{weak}).
 Suppose that
  the coefficient $\beta$ is piecewise constant. Then the following statements hold.\\

{\bf (i) Fourth order convergence rate for approximate pressure $p_h$ at the end nodes.}
  \[       |p(t_i)-p_h(t_i)|\leq Ch^4, \quad  \quad \mbox{ at all end nodes} \quad t_i, i=0,\ldots,n.\]

  {\bf (ii)
  The error in the approximate pressure at the interface point satisfies}
  \[   |p(\alpha)-p_h(\alpha)|\leq Ch^{2.5},\]
  and at the midpoints
  \[  |p(x)-p_h(x)|\leq Ch^{2},\quad  x=t_{i+1/2},\, i=0,\ldots,n-1.\]

 { \bf (iii) Almost uniform error at the end nodes and interface point.}
Define $E(x):=u(x)-u_h(x)$. Then
\begin{equation}\label{cnst1}
E(t_i)=E(t_{i-1}) +O(h^{3.5})\qquad i=1,\ldots n,
\end{equation}
 and
\begin{equation}  E(\alpha)=E(t_k)+O(h^{3.5}).\end{equation}

   {\bf (iv) Third order superconvergence rate of approximate flux at the nodes and interface point.} \\
   \[  |u(x)-u_h(x)|\leq Ch^3 \qquad
  \mbox{ for  all }  x=t_i, i=0,\ldots,n\quad \mbox{ and } \alpha.\]
\end{theorem}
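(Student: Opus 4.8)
The plan is to re-run the proof of Theorem~\ref{thm4.3}, the only structural change being that the zeroth-order term no longer disappears. Two things happen: in the Green's function representation $G$ is no longer piecewise linear (hence not in $V_h$), and in the telescoping identity for $E$ a residual $\int q(p_h-p)$ survives. Wherever the $q=0$ argument produced an exact zero, one now estimates this residual; since $q$ enters only through the already-controlled quantities $\|p-p_h\|_{0}$ and $|p-p_h|_{1}$, every residual is of high order, and the nodal exactness of Theorem~\ref{thm4.3} degrades precisely to the rates $h^4$, $h^{2.5}$, $h^{3.5}$, $h^3$ claimed in (i)--(iv).

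For (i) and (ii) fix a node $t_i$ and let $G=G(\cdot,t_i)$ solve $a(G,v)=v(t_i)$ for all $v\in H^1_0(a,b)$. For $q\neq0$ the function $G$ is piecewise smooth but no longer piecewise linear; its only breakpoints are $t_i$ and $\alpha$, and at $\alpha$ it still obeys $[G]_\alpha=[\beta G']_\alpha=[\beta G'']_\alpha=0$ (the last because $\beta G''=qG$ near $\alpha$ and $q,p$ are continuous there), so $G$ lies in the class for which the interpolation theory of \cite{LinSun} supplies an IFE interpolant $G_h\in V_h$ with $\|G-G_h\|_{0,I}+h|G-G_h|_{1,I}\le Ch^{3}$. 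Galerkin orthogonality and symmetry of $a$ then give
\begin{align*}
|p(t_i)-p_h(t_i)| &= |a(p-p_h,G-G_h)| \\
&\le C|G-G_h|_{1,I}\,|p-p_h|_{1,I}+C\|G-G_h\|_{0,I}\,\|p-p_h\|_{0,I}\le Ch^{4},
\end{align*}
which is (i); the $h^4$ here (rather than $h^3$) is exactly what part (iv) will need. Statement (ii) is obtained as in Theorem~\ref{thm4.3}(ii): for $\xi=\alpha$ the source is at the interface so $[\beta G']_\alpha=-1$ and no element of $V_h$ reproduces it, but subtracting from $G$ the piecewise-linear bubble carrying that jump and IFE-interpolating the smooth remainder produces $G_h\in V_h$ with $|G-G_h|_{1,I}\le Ch^{1/2}$, hence $|p(\alpha)-p_h(\alpha)|\le Ch^{2.5}$; for a midpoint node the kink of $G$ sits at that node and the quadratic interpolant loses half a power on the single element containing it, yielding the stated midpoint bound.

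Part (iii) is immediate from the computation producing (\ref{unif3}): assembling local shape functions into global ones and using $a(p-p_h,\phi_{j+3/2})=a(p-p_h,\phi_{j+1})=0$ leaves $E(x_{j+1,3})-E(x_{j,3})=\int_{x_{j+1,1}}^{x_{j+1,3}}q(p_h-p)\,dx$, valid on interface and non-interface elements alike, and this is bounded by $\|q\|_\infty\|p-p_h\|_{0,I_{j+1}}|I_{j+1}|^{1/2}\le Ch^{3.5}$ by the optimal $L^2$ estimate; subtracting (\ref{u_h_alp}) from (\ref{u_alp}) and noting $F-F_h=q(p_h-p)$ (see (\ref{collect})) gives $E(\alpha)=E(t_k)+\int_{x_{k,1}}^\alpha q(p_h-p)\,dx=E(t_k)+O(h^{3.5})$. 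For part (iv) the naive route --- telescoping (iii) from $t_0$ --- fails, since summing $O(h^{3.5})$ increments over the $O(h^{-1})$ nodes degrades to $O(h^{2.5})$; one must instead bound $E(t_i)$ \emph{independently} at each node, mimicking the evaluation of $E(t_0)$ in Theorem~\ref{thm4.3}(iv). Picking a non-interface element $I_{i-1}$ adjacent to $t_i$ (every node has one, with the two interface-adjacent boundary cases treated by the immersed basis) and using (\ref{wkJ})-type formulas, $E(t_i)=-\int_{I_{i-1}}\beta(p'-p_h')\phi_{i-1,3}'\,dx+\int_{I_{i-1}}q(p_h-p)\phi_{i-1,3}\,dx$. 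Writing $\phi_{i-1,3}'=\tfrac{4}{h^{2}}(x-t_{i-1/2})+\tfrac1h$ on $I_{i-1}$: the $\tfrac1h$-part contributes $\tfrac1h\int_{I_{i-1}}(p'-p_h')\,dx=\tfrac1h\bigl(e(t_i)-e(t_{i-1})\bigr)=O(h^{3})$ by (i); for the $(x-t_{i-1/2})$-part one invokes $a(p-p_h,\phi_{i-1/2})=0$ for the midpoint basis function, whose derivative is a constant multiple of $x-t_{i-1/2}$, to replace $\int_{I_{i-1}}(p'-p_h')(x-t_{i-1/2})\,dx$ by a multiple of $\int_{I_{i-1}}q(p-p_h)\phi_{i-1/2}\,dx=O(h^{3.5})$; the remaining $q(p_h-p)$ term is $O(h^{3.5})$. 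Hence $|E(t_i)|\le Ch^{3}$, and $E(\alpha)=E(t_k)+O(h^{3.5})=O(h^{3})$ by (iii).

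The main obstacle is precisely step (iv): one must resist telescoping (iii) and instead re-earn the third order at each node, which forces the simultaneous use of the \emph{fourth}-order nodal pressure estimate (i) and a \emph{local} Galerkin orthogonality against the midpoint test function; one also has to check that on the interface element $I_k$ the immersed shape functions admit the same splitting into a multiple of $x-(\text{midpoint})$ plus a constant on each side of $\alpha$, and that the denominator $D$ of (\ref{D}) keeps all constants $h$- and $\alpha$-uniform. A secondary technical point is ensuring the piecewise $H^3$ regularity of $G(\cdot,t_i)$ used in (i), which presupposes that $q$ (and $f$) are smooth enough and compatible across $\alpha$ so that $[\beta p'']_\alpha=0$ and the optimal estimates of Section~\ref{AppSpc} apply.
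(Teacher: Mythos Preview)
Your argument for (i)--(iii) is essentially the paper's: the Green's-function duality, the observation $[\beta G'']_\alpha=[qG]_\alpha=0$ so that the IFE interpolant of $G$ is available, the bubble correction at $\alpha$, and the identity $E(t_{j+1})-E(t_j)=\int_{I_{j}}q(p_h-p)\,dx$ from (\ref{unif3}). One minor difference: for the midpoint case of (ii) the paper is cruder than your sketch---it notes that $G$ is neither piecewise linear nor in $H^3(t_i,t_{i+1})$, drops the interpolant altogether, and simply bounds $e(t_{i+1/2})=a(G,e)\le C\|G\|_{1}|e|_{1}\le Ch^2$.

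The real divergence is in (iv). The paper estimates only $E(t_0)$ by the refined $J_1,J_2$ splitting (exactly the computation you reproduce) and then writes ``the rest follows from (\ref{cnst1}) and iteration.'' You reject this route, arguing that summing $O(h^{-1})$ increments each of size $O(h^{3.5})$ yields only $O(h^{2.5})$, and instead redo the refined estimate at every node. Your per-node argument is correct, but your dismissal of telescoping is too hasty: the increments are not abstract $O(h^{3.5})$ terms but \emph{exact integrals over disjoint subintervals} by (\ref{unif3}), so the telescoped sum collapses to
\[
E(t_i)-E(t_0)=\int_{t_0}^{t_i}q(p_h-p)\,dx,
\]
and a single Cauchy--Schwarz on this macroscopic interval gives $|E(t_i)-E(t_0)|\le\|q\|_\infty(b-a)^{1/2}\|p-p_h\|_{0,I}\le Ch^3$. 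So the paper's iteration does deliver third order once one uses (\ref{unif3}) itself rather than the coarser statement (\ref{cnst1}). Both routes are valid: yours avoids the summation subtlety entirely and needs no special handling of $t_0$, while the paper's performs the delicate local computation only once.
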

\begin{proof}
Let $G(x,\xi)$ be the Green's function satisfying
\[ a(G,v)=<\delta(x-\xi),v>,\quad v\in H^1_0(a,b).\]
By working out the closed form of $G$ satisfying the classical formulation
\begin{equation}\label{Gjump1}
  -(\beta G')'+qG=\delta(x-\xi),\quad [G]_\alpha=0,  \quad[\beta G']_\alpha=0, \quad G(a,\xi)=G(b,\xi)=0,
  \end{equation}
just as in (\ref{Gjump}), it is not hard to see that $G$ is a linear combination of smooth functions in $(a,\xi),(\xi,\alpha),(\alpha,b)$
and $G(x,t_i)\in H^3(\Omega)$, for $\Omega=(t_j,t_{j+1}),j\not=k$ and $\Omega=(t_k,\alpha), (\alpha,t_{k+1})$. Similar conclusions hold when $\xi>\alpha$.
Also observe that $[\beta G'']_\alpha=0$ as well. In fact, the classical interpretation of the Green's function implies that
\[   -\beta G''+qG=0  \mbox{ on}\quad (a,\alpha),(\alpha,\xi),\]
since $\beta$ is piecewise constant. So $[\beta G'']_\alpha=[qG]_\alpha=0$.
Now let $G=G(x,t_i)$ and without loss
of generality let's assume $\alpha$ lies in $(t_k,t_{k+1/2})$.
By the local approximation estimates, there exists $ G_h\in V_h$, the interpolant of $G$, such that

\begin{equation}\label{gr} \|G-G_h\|_{1,\Omega}\le C h^2 ||G||_{3,\Omega}
\end{equation}
for all the $\Omega$'s listed above.
Hence
\[   e(t_i)=(\delta(x-t_i), e)=a(G, e)=a(G-G_h,e)\leq Ch^{4}.\]
This completes the proof of (i).
The proof of the interface case in (ii) is similar to that of Thm \ref{thm4.3}.
However, for the midpoint case, the estimate in (\ref{bdd}) or (\ref{gr}) is not applicable since now the Green's function is neither piecewise linear
nor in $H^3(t_i,t_{i+1})$.
Instead we have
\[   e(t_{i+1/2})=(\delta(x-t_{i+1/2}), e)=a(G,e)\leq Ch^{2}\|p\|_{3,I}.\]
Table 3 in the next section will confirm this order is the best we can achieve.
 The statement (iii) is a direct consequence of (\ref{unif3}).

 We now prove (iv).

\begin{eqnarray*}
   E(t_0)&=&u(a)-u_h(a)\\
   &=&\int_a^{t_1}\beta(p'-p_h')\phi_0'dx+\int_a^{t_1}q(p-p_h)\phi_0dx.\\
   &=&J_1+J_2.
      \end{eqnarray*}
      As in (iv) of Thm \ref{thm4.3} we need a refined estimate for the first term $J_1$ on the right side.
      First observe that $a(p-p_h,\phi_{1/2})=0$ with $\phi'_{1/2}=-\frac{4}{h^2}(2x-a-t_1)$ leads to a relation
      \begin{eqnarray}\label{2x}
        \int_a^{t_1}\beta(p-p_h)'(2x)dx&=&\int_a^{t_1}\beta(p-p_h)'(a+t_1)dx
        +\frac{h^2}{4}\int_a^{t_1}q(p-p_h)\phi_{1/2}dx\notag\\
        &=&(a+t_1)\beta(p-p_h)(t_1)+\frac{h^2}{4}\int_a^{t_1}q(p-p_h)\phi_{1/2}dx.\label{24}
        \end{eqnarray}
      Further, with $\phi'_0=\frac{2}{h^2}(2x-t_1-t_{1/2})$ we have
      \begin{eqnarray*}
      J_1&=&\frac {2}{h^2} \int_a^{t_1}\beta(p-p_h)'(2x-t_1-t_{1/2})dx\\
      &=&\frac {2}{h^2} \int_a^{t_1}\beta(p-p_h)'(2x)dx-\frac {2}{h^2} \beta(t_1+t_{1/2})(p-p_h)(t_1)\\
      &=&\frac {2}{h^2}(a+t_1)\beta(p-p_h)(t_1)+\frac 12 \int_a^{t_1}q(p-p_h)\phi_{1/2}dx \\
      &&-\frac {2}{h^2} \beta(t_1+t_{1/2})(p-p_h)(t_1)  \quad \mbox{ by (\ref{24})}\\
      &=&\frac {2}{h^2}[\beta(a-t_{1/2})(p-p_h)(t_1)]+\frac 12 \int_a^{t_1}q(p-p_h)\phi_{1/2}dx.
      \end{eqnarray*}

Thus
\[  J_1\leq Ch^{-2}h h^4+Ch^3h^{0.5}\leq Ch^3,\]
and  $J_2\leq Ch^3$ imply
\[  E(t_0)\leq Ch^3.\]
The rest of the proof follows from (\ref{cnst1}) and iteration.
 This completes the proof of (iv).
\end{proof}

\section{Numerical examples}

{\bf Problem 1.} Consider
\[
-(\beta p')' = f(x)=x^m,  \quad p(0)=p(1)=0,
\]
where $m$ is a nonnegative integer. The interface point is located at $\alpha$ and

\begin{equation*}
\beta(x) =  \begin{cases}
      \beta^{-} & x\in [0,\alpha), \\
	 \beta^{+} & x\in (\alpha,1].
   \end{cases}
\end{equation*}

The exact solution is

\begin{equation}\label{exctP}
p(x) =  \begin{cases}
     \displaystyle\frac{-1}{(m+1)(m+2)\beta^{-}}x^{m+2}+\frac{t^{-}}{\beta^{-}}x & x\leq\alpha, \\
	\displaystyle \frac{-1}{(m+1)(m+2)\beta^{+}}x^{m+2}+\frac{t^{+}}{\beta^{+}}x - \frac{t^{+}}{\beta^{+}} -  \frac{-1}{(m+1)(m+2)\beta^{+}}  & x\geq\alpha,
   \end{cases}
\end{equation}
where
\begin{align*}
t^{+}&=t^{-}\\
&=\left(\frac{\alpha-1}{\beta^{+}}-\frac{\alpha}{\beta^{-}}\right)
\left(
 \frac{-\alpha^{m+2}}{(m+1)(m+2)\beta^{-}} + \frac{\alpha^{m+2}}{(m+1)(m+2)\beta^{+}} -
  \frac{1}{(m+1)(m+2)\beta^{+}}
\right).
\end{align*}
The flux
\begin{equation}\label{flxP}
u(x) = -\beta p'(x) =\frac{1}{m+1}x^{m+1} - t^{-}
\end{equation}
is smooth over $[0,1]$. For the numerical runs, we set $\beta^{-}=100$, $\beta^{+}=1$, $f(x)=x^{m}$, $\alpha=1/3$ and calculate the maximum  pressure and flux error at nodes
\begin{eqnarray*}
 pErrEndNodes &=&  \max_{1\leq i\leq n-1} |p(t_{i})-p_{h}(t_{i})|, \\
 pErrMidNodes &=& \max_{1\leq i\leq n-1} |p(t_{i+1/2})-p_{h}(t_{i+1/2})|,  \\
 uErrEndNodes &=& \max_{1\leq i\leq n-1} |u(t_{i})-u_{h}(t_{i})|,  \\
 \end{eqnarray*}
respectively. At the interface point $\alpha$ errors are given by
 \begin{eqnarray}
 pErr@alp &=& |p(\alpha)-p_{h}(\alpha)|, \\
 uErr@alp &=& |u(\alpha)-u_{h}(\alpha)|.
 \end{eqnarray}
In Tables \ref{fig:P1press} and \ref{fig:P1flx} below we list the error at the nodes and the interface points for different mesh sizes and $m$ values for pressure and flux, respectively.  The pressure at the end nodes and the flux both at the end nodes and at the interface point numerical values are exact, as predicted by Thm \ref{thm4.3}. However, for pressure at the midpoint nodes and interface point numerical results are better than the theoretic estimates.

\begin{table}[h]
\centering
    \begin{tabular}{|c||c|c|c|c|c|c|}
    \hline
    Problem 1 & h=1/16 & h=1/32 & h=1/64 & h=1/128 & m & order  \\
    \hline\hline
    pErrEndNodes &  1.0755e-13 & 2.9143e-14 & 1.3910e-14 & 3.2916e-15 & 2 & $\approx$ exact \\
    \hline
     pErrEndNodes & 4.4541e-13 & 6.5573e-14 & 1.5613e-14 & 4.1633e-15 & 5 & $\approx$ exact \\
    \hline
     pErrEndNodes & 1.9241e-13 & 2.5717e-14 & 6.099e-15 & 1.9949e-15 & 10 &  $\approx$ exact \\
     \hline
     pErrMidNodes & 1.5895e-08 & 9.9341e-10 & 6.2088e-11 & 3.8880e-12 & 2 & $\approx$ 4 \\
     \hline
     pErrMidNodes & 1.4455e-07 & 9.4764e-09 & 6.0645e-10 & 3.8352e-11 & 5 & $\approx$ 4 \\
     \hline
      pErrMidNodes & 5.5636e08 & 3.9438e-08 & 2.6245e-09 & 1.6925e-10 & 10 & $\approx$ 4 \\
      \hline
    pErr@alp   & 1.0282e-06 & 1.2412e-07 & 1.5790e-08 & 1.9565e-09   & 2 & $\approx$ 4 \\
    \hline
   pErr@alp   & 1.0260e-07 & 1.1108e-08 & 1.4884e-09 & 1.4884e-09   & 5 & $\approx$ 4 \\
   \hline
     pErr@alp   &  9.7359e-10  & 8.6795e-11 & 1.2635e-11 & 1.4572e-12 & 10 & $\approx$ 4 \\
     \hline
    \end{tabular}
    \caption{Maximum error at the nodes and the interface point of approximate pressure }
    \label{fig:P1press}
\end{table}

\begin{table}[h]
\centering
    \begin{tabular}{|c||c|c|c|c|c|c|}
    \hline
    Problem 1 & h=1/16 & h=1/32 & h=1/64 & h=1/128 & m & order  \\
    \hline\hline
    uErrEndNodes & 3.9077e-13 & 9.8865e-14 & 2.0622e-14 & 4.6352e-15 & 2 & $\approx$ exact \\
    \hline
    uErrEndNodes & 1.2890e-13 & 2.4626e-14 & 2.4626e-14 & 7.2650e-15 & 5 & $\approx$ exact \\
    \hline
    uErrEndNodes & 1.2244e-14 & 4.2251e-14 & 2.8484e-15 & 3.8858e-16 & 10 & $\approx$ exact \\
    \hline
 uErr@alp   & 1.0729e-13 & 2.4786e-14 & 1.4710e-15 & 7.9381e-15  & 2 & $\approx$ exact \\
 \hline
  uErr@alp   & 3.4445e-14 & 7.0742e-15 & 1.1657e-15 &  3.4348e-16   & 5 & $\approx$ exact \\
  \hline
   uErr@alp   &  1.3572e-14 & 2.6056e-15 & 4.8399e-16 &  3.0184e-16 & 10 & $\approx$ exact \\
   \hline
    \end{tabular}
    \caption{Maximum error at the nodes and the interface point $\alpha$ of approximate flux }
     \label{fig:P1flx}
\end{table}

{\bf Problem 2.} Consider
\[
-(\beta p')' + qp= f(x),  \quad p(0)=p(1)=0,
\]
where $m$ and $\alpha$ are defined in a same way as in Problem 1. We used the same exact solution $p(x)$ and $u(x)$ defined in (\ref{exctP}) and (\ref{flxP}).  For the numerical simulation we set $q=1$ and $f(x)=x^m+p(x)$ and $\beta$ and $\alpha$ values are same as in Problem 1. For Problem 2, convergence rates for pressure at nodes are as predicted in Thm \ref{thm4.4}, whereas for the flux numerical values have higher convergence rates at the end nodes and at the interface point.
\begin{table}[h]
\centering
    \begin{tabular}{|c||c|c|c|c|c|c|}
    \hline
    Problem 2 & h=1/16 & h=1/32 & h=1/64 & h=1/128 & m & order  \\
    \hline\hline
    pErrEndNodes &  1.5322e-08 & 9.7490e-10 & 6.1512e-11 &  3.8833e-12 & 2 & $\approx$ 4 \\
    \hline
     pErrEndNodes & 1.4261e-07 & 9.4103e-09 & 6.0430e-10 & 3.8283e-11 & 5 & $\approx$ 4 \\
    \hline
     pErrEndNodes & 5.5233e-07& 3.9290e-08 & 2.6194e-09 & 1.6908e-10& 10 &  $\approx$ 4 \\
     \hline
    pErrMidNodes & 1.5774e-04 & 4.0059e-05 & 1.0093e-05 & 2.5332e-06  & 2 & $\approx$ 2 \\
    \hline
   pErrMidNodes & 3.5886e-04 & 9.5547e-05 & 2.4648e-05 & 6.2592e-06  & 5 & $\approx$ 2 \\
   \hline
   pErrMidNodes & 6.1493e-04 & 1.7680e-04 & 4.7412e-05 & 1.2277e-05 & 10 & $\approx$ 2 \\
   \hline
    pErr@alp   & 1.0282e-06 & 1.2412e-07 & 1.5790e-08 & 1.9565e-09 & 2 & $\approx$ 4 \\
   \hline
     pErr@alp   & 1.0260e-07 & 1.1108e-08 & 1.4884e-09 & 1.7959e-10 & 5 & $\approx$ 4 \\
     \hline
    pErr@alp   &  9.7359e-10 & 8.6795e-11 & 1.2635e-11 & 1.4572e-12 & 10 &  $\approx$ 4 \\
    \hline
    \end{tabular}
    \caption{Maximum error at the nodes  and the interface point of approximate pressure }
      \label{fig:P2press}
\end{table}


\begin{table}[h]
\centering
    \begin{tabular}{|c||c|c|c|c|c|c|}
    \hline
    Problem 2 & h=1/16 & h=1/32 & h=1/64 & h=1/128 & m & order  \\
    \hline\hline
    uErrEndNodes & 2.7964e-08 & 1.7779e-09 & 1.1119e-10 & 6.9698e-12 & 2 & $\approx$ 4 \\
    \hline
    uErrEndNodes & 3.3232e-08 & 2.0842e-09 & 1.3032e-10 & 8.1454e-12 & 5 & $\approx$ 4 \\
    \hline
    uErrEndNodes & 3.4227e-08 & 2.1550e-09 & 1.3493e-10 & 8.4356e-12 & 10 & $\approx$ 4 \\
    \hline
 uErr@alp   & 3.0707e-08 & 1.9893e-09 & 1.2439e-10 & 7.8315e-12  & 2 & $\approx$ 4 \\
 \hline
  uErr@alp   & 4.2560e-08 & 2.6929e-09 & 1.6839e-10 &  1.0545e-11   & 5 & $\approx$ 4 \\
  \hline
   uErr@alp   &  4.5229e-08 &2.8495e-09 & 1.7841e-10 & 1.1155e-11 & 10 & $\approx$ 4 \\
   \hline
    \end{tabular}
    \caption{Maximum error at the nodes and the interface point $\alpha$ of approximate flux }
      \label{fig:P2flx}
\end{table}


\begin{thebibliography}{9}
\bibitem{Chou2} S. H. Chou, \emph{ Flux recovery and supreconvergence in the classical and immersed finite element solutions}, in preparation (2015)
\bibitem{AttanChou} C. Attanyake and S. H. Chou, \emph{An immersed interface method for Pennes' bioheat transfer equation}, Discrete and Continuous Dynamical System-B, {\bf 20}, No 2, (2015), pp. 323-337.
\bibitem{Chou}
S. H. Chou, \emph{An immersed finite element method with interface flux capturing recovery}, Discrete and Continuous Dynamical Systems-B,
 {\bf 17}, (2012), pp. 2343-2357.

\bibitem{ChouTang}
S. H. Chou and S. Tang, \emph{Conservative P1 conforming and nonconforming Galerkin FEMs: effective flux evaluation via a nonmixed method approach,} SIAM J. Numer. Anal. {\bf 38} (2000),  pp. 660-680.


    \bibitem{He} X.  He, {\it Bilinear Immersed Finite Elements For Interface Problems }, Ph.D. thesis, Virginia Tech., Blacksberg, VA, (2009).
\bibitem{LI} Z. Li,  {\it The immersed interface method using a finite element formulation}, Applied Numerical Mathemtics, {\bf 27} (1998), pp. 253--267.
\bibitem{LI:2006} Z. Li and K. Ito, {\it The immersed interface method:
  Numerical solutions of PDEs involving interfaces and irregular
  domains}, SIAM, 2006, pp. 176.

\bibitem{Li2004} Z. Li, T. Lin, Y. Lin and R. C. Rogers, {\it An immersed finite element space and its
  approximation capability}, Numer. Methods. Partial Differential Equations, 20, (2004), pp. 338-367.

\bibitem{Li2003} Z. Li, T. Lin and X. Wu, {\it New Cartesian grid methods for interface problems using the
  finite element formulation}, Numer. Math., 96, (2003), pp. 61-98.

\bibitem{TLin2001} T. Lin, Y. Lin, R. Rogers and M. L. Ryan, {\it A rectangular immersed finite element space
  for interface problems}, Advances in Computation: Theory and Practice, 7, (2001), pp. 107-114.
\bibitem{LinSun} T. Lin, Y. Lin, W. Sun, \emph{Error Estimation of a Class Quadratic Immersed Finite Element
Methods for Elliptic Interface Problems}, Discrete and Continuous Dynamical Systems-Series B, {\bf 4}, (2007), pp. 807-823.
\bibitem{Stak} I. Stakgold, \emph{Green's functions and boundary value problems}, 2nd ed., Wiley-Interscience, New York, (1998).
\bibitem{Wahlbin} L. B. Wahlbin, \emph{ Superconvergence in Galerkin finite element methods}, Springer-Verlag, (1995).
\bibitem{Zhan} X. Zhang, \emph{Nonconforming Immersed Finite Element Methods for Interface Problems}, Ph.D. thesis, Virginia Polytechnic Institute and State University, (2013).

\end{thebibliography}
\end{document}